\theoremstyle{definition}
\newtheorem{definition}{Definition}
\newtheorem{example}[definition]{Example}
\theoremstyle{remark}
\newtheorem{remark}[definition]{Remark}
\theoremstyle{plain}
\newtheorem{lemma}[definition]{Lemma}
\newtheorem{theorem}[definition]{Theorem}
\newcommand{\set}[1]{\left\{{#1}\right\}}
\newcommand{\vek}[1]{\boldsymbol{#1}}
\newcommand\setsuchas[2]{\left\{\,{#1}\,:\,{#2}\,\right\}}
\newcommand{\tdeg}[1]{\lvert #1 \rvert}
\newcommand{\Nat}{{\mathbb{N}}}
\newcommand{\Z}{{\mathbb{Z}}}
\newcommand{\Q}{{\mathbb{Q}}}
\newcommand{\divides}[2]{{#1} \lvert {#2}}
\newcommand{\ind}{\mathrm{ind}}
\newcommand{\Ind}{\mathrm{Ind}}
\begin{document}
\title{Generating functions for borders}
\author{Jan Snellman}
\address{Department of Mathematics, Link\"oping University\\
SE 58183 Link\"oping, Sweden}
\email{jan.snellman@liu.se}

\begin{abstract}
  We give the generating function for the ``index'' function on integer lattice
  points, relative to a finite order ideal. The index is an important
  concept in the theory of border bases, an alternative to Gr\"obner
  bases. 
\end{abstract}

\keywords{Border bases, monomial ideals, generating
functions}
\subjclass{05A15; 05A17}
\maketitle

\begin{section}{Introduction}
  Let \(S=K[x_{1},\dots,x_{n}]\) be the polynomial ring over a field \(K\), and let
  \(I \subset S\) be an ideal with Krull-dimension zero; i.e., the quotient ring \(S/I\)
  has finite dimension as a \(K\)-vector space. Suppose further that
  \[\mathcal{O}= \{t_{1},\dots, t_{\mu}\} \subset T([n]) = \setsuchas{x^\alpha}{\alpha \in   \Nat^n}\]
  is an on \emph{order ideal} with
  respect to the divisibility ordering of \(T([n])\). In other words,
  \(\mathcal{O}\) is a set of power products which is closed under taking divisors.
  Then an \(\mathcal{O}\)-border basis
  \cite{KehKreu:CBB,
    kehrein2005algebraist,
    kehrein2006computing,
    kreuzer2008deformations,
    kreuzer2011geometry,
    mourrain2007pythagore,
    mourrain2012border}
  of \(I\) is a set of polynomials \(G=\{g_{1},\dots,g_{\nu}\}\) of the form
  \(g_{j} = b_{j} - \sum_{i=1}^{\mu}c_{ji}t_{i}\), where \(\{b_{1},\dots,b_{\nu}\}\) is the
  \emph{border}
  \[
    \partial \mathcal{O} = (x_{1} \mathcal{O} \cup \cdots \cup x_{n} \mathcal{O}) \setminus \mathcal{O}
  \]
  of \(\mathcal{O}\), and \(c_{ji} \in K\). The \(c_{ji} \in K\) should be such that \(I\) is generated by
  \(G\), and \(\mathcal{O}\) should be a basis for the \(K\)-vector space \(S/I\).

  Border bases are a more recent notion that that of \emph{Gröbner bases}
  \cite{buchberger2006bruno,buchberger1998grobner,froberg1997introduction}
  which also yields a generating set of \(I\) and a set of monomials whcih form
  a \(K\)-vector space basis of \(S/I\).
  Border bases have some advantages:
  \begin{itemize}
    \item Border bases do not depend on a \emph{term order} (a multiplicative total order on \(T([n])\)).
    \item Border bases are more stable with respect to perturbations of the coefficients of the generators of
          \(I\) \cite{abbott2008stable}.
    \item Border bases may be more suitable for ssolving polynomial systems with a high degree of symmetry.
  \end{itemize}

  Of fundamental importance in the theory is the notion of the \emph{index} function
  \(\ind_{\mathcal{O}}(t)\) which measures ``how far away'' a power product \(t\) is from the
  order ideal \(\mathcal{O}\). In two variables, the index is defined by the difference equation
    \begin{equation}
      \label{eq:rec2}
      \ind_{\mathcal{O}}(x_{1}^{a_{1}}x_{2}^{a_{2}}) =
        \begin{cases}
          0 & x_{1}^{a_{1}}x_{2}^{a_{2}} \in \mathcal{O}, \\
          1 + \ind_{\mathcal{O}}(x_{1}^{a_{1}-1}) & x_{1}^{a_{1}}x_{2}^{a_{2}} \not \in \mathcal{O} \text{ and } a_{2}=0, \\
          1 + \ind_{\mathcal{O}}(x_{2}^{a_{2}-1}) & x_{1}^{a_{1}}x_{2}^{a_{2}} \not \in \mathcal{O} \text{ and } a_{1}=0, \\
          1+ \min  \bigl(\ind_{\mathcal{O}}(x_{1}^{a_{1}-1}x_{2}^{a_{2}}) ,
          \ind_{\mathcal{O}}(x_{1}^{a_{1}}x_{2}^{a_{2}-1}) \bigr) & \text{ otherwise. }
      \end{cases}
    \end{equation}

    As an example, let \(\mathcal{O}=\set{1,x,x^{2},y}\).
 Then the index is the integer-valued function
    on \(T([2])\) given by
    \begin{displaymath}
      \left(\begin{array}{rrrrrrr}
5 & 6 & 6 & 7 & 8 & 9 & 10 \\
4 & 5 & 5 & 6 & 7 & 8 & 9 \\
3 & 4 & 4 & 5 & 6 & 7 & 8 \\
2 & 3 & 3 & 4 & 5 & 6 & 7 \\
1 & 2 & 2 & 3 & 4 & 5 & 6 \\
0 & 1 & 1 & 2 & 3 & 4 & 5 \\
0 & 0 & 0 & 1 & 2 & 3 & 4
\end{array}\right),
    \end{displaymath}
    which can be visualized as in Figure~\ref{fig:M1}.
    \begin{figure}[htb]
      \includegraphics[width=0.3\textwidth]{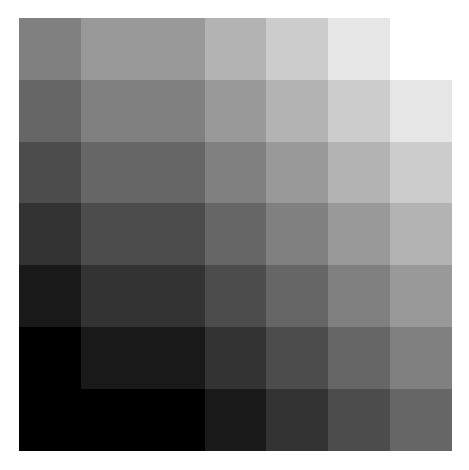}
      \caption{The index function of the order ideal \(\set{1,x,x^{2},y}\). }
      \label{fig:M1}
    \end{figure}

    We can:
    \begin{enumerate}
      \item Regard the index function
            \(x_{1}^{a_{1}}x_{2}^{a_{2}} \mapsto \ind_{\mathcal{O}}(x_{1}^{a_{1}}x_{2}^{a_{2}})\)
            as a function \(c: \Nat^{2} \to \Nat\).
            Then this \(c\) is the unique solution to the difference equation
            \begin{equation}
      \label{eq:rec3}
      c_{i,j} =
        \begin{cases}
          0 & (i,j) \in \mathcal{O}, \\
          1 + c_{i-1,j} & (i,j) \not \in \mathcal{O}, \, i>0, j=0 \\
          1 + c_{i,j-1} & (i,j) \not \in \mathcal{O}, \, i=0, j>0 \\
          1 + \min(c_{i-1,j}, c_{i,j-1}) &  (i,j) \not \in \mathcal{O}, \, i,j > 0
        \end{cases}
    \end{equation}

      \item Form the generating function\footnote{For this example, we switch to new variables; in
            the main text we abuse notation and utilize \(x_{1},\dots,x_{n}\) for two different
            purposes.}
            \(f(u_{1},u_{2})=\sum_{i,j} c_{{i,j}}u_{1}^{i}u_{2}^{j} \in \Z[[u_{i},u_{j}]]\),
    \item
      Express the index as a very special rational function.
    \end{enumerate}
    In the example,
    \begin{displaymath}
      f(u_{1},u_{2}) = \frac{u_1^{2}}{(1-u_{1})(1-u_{2})}
      + \frac{u_{1} + u_{2}}{1-u_{2}}
    \end{displaymath}

    This article is concerned with finding an explicit formula for the generating function
    \(f\) of the index. While certainly inspired by the theory of border bases, we treat this as a
    purely combinatorial problem, where the difference equation \eqref{eq:rec3} is ``almost linear''
    in that it is the minimum of two (in \(n\) variables, \(n\)) (affine) linear forms.
    The study of systems of linear difference equations in several variables are surprisingly deep
    \cite{bousquet2006polynomial,
      bousquet2005walks,
      bousquet2003walks,
      bousquet2010families,
      prea1997exterior,
      flajolet2009analytic}.
    Since the difference equation~\eqref{eq:rec3} involves a right-hand side which is the minimum
    of linear forms, it can be regarded as a ``tropical polynomial''
    \cite{speyer2009tropical,mikhalkin2009tropical}.

    There are a few works that treat tropical difference (and differential) equations
    \cite{grigoriev2017tropical,giansiracusa2024general}; it is a somewhat complicated and technical
    area of study. As a contrast, the solutions to the difference equations involving the index function
    are, while not completely trivial, still very easy and concrete.

\end{section}

\begin{section}{Definition and elementary properties of borders}
  \begin{definition}
    Let \([n]=\set{1,2,\dots,n}\).
    Let \(X([n])=\set{x_1,\dots,x_n}\) and let \(T([n])\) be the
    monoid of power products 
    \[T([n]) = \setsuchas{x^\alpha}{\alpha \in   \Nat^n}.\] If 
    \(S \subseteq [n]\) then 
    \[T(S) =
    \setsuchas{x^\alpha}{\alpha \in \Nat^n, \, i \not \in S \implies
      \alpha_i=0}.\] 
    We denote the degree of a power product \(t=x^\alpha\) by
    \(\tdeg{t} = \tdeg{\alpha}\), and let
    \begin{displaymath}
      \begin{split}
        T(S)_a &= \setsuchas{t \in T(S)}{\tdeg{t}=a} \\
        T(S)_{< a} &= \setsuchas{t \in T(S)}{\tdeg{t} < a} 
    \end{split}
    \end{displaymath}
  \end{definition}
  
  \begin{definition}
    An order ideal \(\mathcal{O} \subset T([n])\) is a set
    of power products closed under division, i.e.
    \begin{displaymath}
      t \in \mathcal{O}, \, \divides{s}{t} \, \implies s \in \mathcal{O}
    \end{displaymath}
    The border of an order ideal is defined by
    \begin{displaymath}
      \partial \mathcal{O} = \left[ T([n])_1 \mathcal{O}\right] \,
      \setminus \, \mathcal{O}
    \end{displaymath}
    and the closed border by
    \begin{displaymath}
      \overline{\partial \mathcal{O}} = \partial \mathcal{O} \, \cup
      \mathcal{O} 
    \end{displaymath}
    By iteration, we get the higher borders
    \begin{displaymath}
      \partial^{k} \mathcal{O} = \partial \left(
        \overline{\partial^{k-1} \mathcal{O}} \right), \qquad
       \overline{\partial^{k} \mathcal{O}} = \partial^{k} \mathcal{O}
       \cup \overline{\partial^{k-1} \mathcal{O}}
    \end{displaymath}
    We also define
    \begin{displaymath}
      \partial^{0} \mathcal{O} = \mathcal{O}, \qquad
      \overline{\partial^{0} \mathcal{O}} = \mathcal{O}
    \end{displaymath}
  \end{definition}

Kehrein and Kreuzer \cite{KehKreu:CBB} showed:
  \begin{lemma}
    Let \(\mathcal{O}\) be an order ideal.
    \begin{enumerate}[(a)]
    \item For every \(k \ge 1\), we have that 
      \[\partial^k \mathcal{O}
      = T([n])_k \cdot \mathcal{O} \setminus \left( T([n])_{<k} \cdot
        \mathcal{O} \right).\]
    \item For every \(k \ge 1\), we have a disjoint union
      \begin{displaymath}
        \overline{\partial^k \mathcal{O}} = \bigcup_{i=0}^k \partial^i
        \mathcal{O}. 
      \end{displaymath}
      In particular, we have a disjoint union
      \begin{displaymath}
        T([n]) = \bigcup_{i=0}^\infty \partial^i \mathcal{O}.
      \end{displaymath}
    \item A term \(t \in T([n])\) is divisible by a term in
      \(\mathcal{O}\) iff \(t \in T([n]) \setminus \mathcal{O}\).
    \end{enumerate}
  \end{lemma}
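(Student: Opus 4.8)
The plan is to handle the three parts in order, with part~(a) carrying essentially all of the content and parts~(b),(c) then following quickly. I would first isolate one closure fact, which I expect to be the only real obstacle: the product $A\cdot B=\setsuchas{ab}{a\in A,\ b\in B}$ of two order ideals is again an order ideal. In exponent coordinates, if $r$ divides $ab$ then $r_i\le a_i+b_i$ for each $i$; putting $a'_i=\min(r_i,a_i)$ and $b'_i=r_i-a'_i$ yields a factorisation $r=a'b'$ with $a'\mid a$ and $b'\mid b$, so $a'\in A$, $b'\in B$ and $r\in A\cdot B$. Since each $T([n])_{<m+1}$ is trivially an order ideal (division cannot raise degree), this makes every $T([n])_{<m+1}\cdot\mathcal{O}$ an order ideal — precisely what legitimises applying $\partial$ to it during the iteration. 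Everything after this is graded bookkeeping.

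For~(a) I would induct on $k$, proving simultaneously the companion identity $\overline{\partial^k\mathcal{O}}=T([n])_{<k+1}\cdot\mathcal{O}$. The bases $k=0$ (where $\overline{\partial^0\mathcal{O}}=\mathcal{O}=\set{1}\cdot\mathcal{O}$) and $k=1$ (where the definition of $\partial\mathcal{O}$ already matches, since $T([n])_{<1}\cdot\mathcal{O}=\mathcal{O}$) are immediate. For the step I would write $\partial^k\mathcal{O}=\partial(\overline{\partial^{k-1}\mathcal{O}})$, substitute $\overline{\partial^{k-1}\mathcal{O}}=T([n])_{<k}\cdot\mathcal{O}$ (an order ideal by the closure fact, so that $\partial$ applies), and unfold
\[
\partial^k\mathcal{O}=T([n])_1\cdot\bigl(T([n])_{<k}\cdot\mathcal{O}\bigr)\ \setminus\ \bigl(T([n])_{<k}\cdot\mathcal{O}\bigr).
\]
The elementary identity $T([n])_1\cdot T([n])_j=T([n])_{j+1}$ (every term of degree $j+1$ splits off a variable) gives $T([n])_1\cdot T([n])_{<k}=\bigcup_{i=1}^{k}T([n])_i$, so that $\partial^k\mathcal{O}=U\setminus V$ with $U=\bigcup_{i=1}^{k}T([n])_i\cdot\mathcal{O}$ and $V=\bigcup_{i=0}^{k-1}T([n])_i\cdot\mathcal{O}$. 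Since $U\setminus V=(U\cup V)\setminus V$ and $U\cup V=T([n])_{<k+1}\cdot\mathcal{O}$, the difference reduces to $T([n])_{<k+1}\cdot\mathcal{O}\setminus T([n])_{<k}\cdot\mathcal{O}$ and then, using $T([n])_{<k+1}\cdot\mathcal{O}=T([n])_k\cdot\mathcal{O}\cup T([n])_{<k}\cdot\mathcal{O}$, to $T([n])_k\cdot\mathcal{O}\setminus T([n])_{<k}\cdot\mathcal{O}$ — this is~(a). Re-adjoining $\overline{\partial^{k-1}\mathcal{O}}$ gives the companion identity for $k$, closing the induction.

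For~(b), the union $\overline{\partial^k\mathcal{O}}=\bigcup_{i=0}^{k}\partial^i\mathcal{O}$ is the recursion $\overline{\partial^k\mathcal{O}}=\partial^k\mathcal{O}\cup\overline{\partial^{k-1}\mathcal{O}}$ simply unrolled. Disjointness is immediate from~(a): the sets $W_i:=T([n])_{<i+1}\cdot\mathcal{O}$ form an increasing chain, and $\partial^i\mathcal{O}=W_i\setminus W_{i-1}$ (with $W_{-1}=\emptyset$), so the shells are the successive differences of a nested chain and hence pairwise disjoint. Letting $k\to\infty$ yields $T([n])$ on the left, because every term $t$ lies in $W_{\tdeg{t}}$ (write $t=t\cdot 1$ with $1\in\mathcal{O}$), so each $t$ occupies exactly one shell.

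Finally, for~(c) — which I read as characterising $T([n])\setminus\mathcal{O}$ by divisibility by a border term, i.e.\ by an element of $\partial\mathcal{O}$ (read literally it fails, since every term is divisible by $1\in\mathcal{O}$) — the backward direction is immediate: if $b\mid t$ with $b\in\partial\mathcal{O}\subseteq T([n])\setminus\mathcal{O}$, then $t\notin\mathcal{O}$, else closure under division would force $b\in\mathcal{O}$. For the forward direction, given $t\notin\mathcal{O}$ I would pick, among the divisors of $t$ lying outside $\mathcal{O}$ (nonempty, as $t$ itself qualifies), one of least degree, say $b$; since $1\in\mathcal{O}$ we have $b\neq 1$. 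By minimality every proper divisor of $b$ lies in $\mathcal{O}$, so for any variable $x_i\mid b$ we get $b/x_i\in\mathcal{O}$ and hence $b=x_i\cdot(b/x_i)\in T([n])_1\mathcal{O}\setminus\mathcal{O}=\partial\mathcal{O}$, with $b\mid t$ as required.
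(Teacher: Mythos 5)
Your proof is correct, but there is nothing in the paper to compare it against line by line: the paper's entire proof of this lemma is the citation ``See \cite{KehKreu:CBB}''. What you have produced is a self-contained reconstruction of the argument that the paper outsources, and it is essentially the standard one from that reference: prove (a) by induction on \(k\) jointly with the companion identity \(\overline{\partial^k\mathcal{O}}=T([n])_{<k+1}\cdot\mathcal{O}\), after first checking that a product of order ideals is again an order ideal (your coordinatewise-\(\min\) splitting is the right argument, and it is exactly what legitimises iterating \(\partial\)); part (b) then falls out because the shells \(\partial^i\mathcal{O}\) become successive differences of the nested chain \(W_i=T([n])_{<i+1}\cdot\mathcal{O}\), and such differences are automatically pairwise disjoint. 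Two further points are worth recording. First, you correctly observed that part (c) as printed is false --- every term is divisible by \(1\in\mathcal{O}\) --- and your repaired reading, divisibility by a term of \(\partial\mathcal{O}\), is the statement actually proved in \cite{KehKreu:CBB}; your argument for it (take a divisor of \(t\) outside \(\mathcal{O}\) of minimal degree and show it lies in \(\partial\mathcal{O}\)) is sound. Second, your proof, and indeed the lemma itself in the second half of (b) and in (c), tacitly assumes \(\mathcal{O}\neq\emptyset\), equivalently \(1\in\mathcal{O}\); this is the standard convention for order ideals in border basis theory, so nothing is lost, but it deserves a word if this proof were to be spliced into the paper in place of the citation.
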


  \begin{definition}
    Given an order ideal \(\mathcal{O}\) and a power product \(t \in
    T([n])\), we define the index \(\ind_{\mathcal{O}}(t)\) as
    the minimum \(k\) such that \(t \in \overline{\partial^{k}
      \mathcal{O}}\). 
  \end{definition}
  
  Kehrein and Kreuzer \cite{KehKreu:CBB} proved the following properties of the index:
  \begin{lemma}\label{lemma:Kehrein}
    \begin{enumerate}[(a)]
    \item The index \(\ind_{\mathcal{O}}(t)\) is the smallest natural
      number \(k\) such that \(t=t_1t_2\) with \(t_1 \in
      \mathcal{O}\), \(t_2 \in T([n])_k\).
    \item \(\ind(tt') \le \tdeg{t} + \ind_{\mathcal{O}}(t')\).
    \end{enumerate}
  \end{lemma}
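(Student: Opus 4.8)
The plan is to first translate the definition of the index, which is phrased through the iterated closed borders $\overline{\partial^k \mathcal{O}}$, into a purely multiplicative statement. Concretely, I would prove by induction on $k$ that
\[
  \overline{\partial^k \mathcal{O}} \;=\; \bigcup_{i=0}^{k} T([n])_i \cdot \mathcal{O} \;=\; T([n])_{<k+1}\cdot \mathcal{O}.
\]
The base case $k=0$ is immediate, since $\overline{\partial^0 \mathcal{O}} = \mathcal{O} = T([n])_0 \cdot \mathcal{O}$. For the inductive step I would use part (b) of the preceding lemma to write $\overline{\partial^k \mathcal{O}} = \partial^k \mathcal{O} \cup \overline{\partial^{k-1} \mathcal{O}}$, substitute the inductive hypothesis $\overline{\partial^{k-1}\mathcal{O}} = T([n])_{<k}\cdot\mathcal{O}$, and insert the description $\partial^k \mathcal{O} = T([n])_k \cdot \mathcal{O} \setminus T([n])_{<k}\cdot\mathcal{O}$ from part (a). Then the set-theoretic identity $(B \setminus C) \cup C = B \cup C$ collapses the expression to $T([n])_k\cdot\mathcal{O} \cup T([n])_{<k}\cdot\mathcal{O} = T([n])_{<k+1}\cdot\mathcal{O}$, as required.

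With this in hand, part (a) follows by unwinding the minimum. By definition $\ind_{\mathcal{O}}(t)$ is the least $k$ with $t \in \overline{\partial^k \mathcal{O}} = \bigcup_{i=0}^{k} T([n])_i\cdot\mathcal{O}$, and membership in this union means precisely that there is some $j \le k$ and a factorization $t = t_1 t_2$ with $t_1 \in \mathcal{O}$ and $t_2 \in T([n])_j$. The set $A = \setsuchas{j}{t = t_1 t_2,\ t_1 \in \mathcal{O},\ \tdeg{t_2} = j}$ is nonempty, since $t = t \cdot 1$ with $1 \in \mathcal{O}$ exhibits $\tdeg{t} \in A$. The condition defining the index therefore asks for the least $k$ lying in the upward closure of $A$, and the least element of the upward closure of a nonempty set of natural numbers is just $\min A$. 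Hence $\ind_{\mathcal{O}}(t) = \min A$, which is exactly the smallest $k$ admitting a factorization $t = t_1 t_2$ with $t_1 \in \mathcal{O}$ and $t_2 \in T([n])_k$.

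Given (a), part (b) is short. Writing $k = \ind_{\mathcal{O}}(t')$, part (a) supplies a factorization $t' = t_1' t_2'$ with $t_1' \in \mathcal{O}$ and $t_2' \in T([n])_k$. Then $tt' = t_1'\,(t\,t_2')$ writes $tt'$ as a product of $t_1' \in \mathcal{O}$ with the power product $t\,t_2'$ of degree $\tdeg{t}+k$. Since part (a) characterizes $\ind_{\mathcal{O}}(tt')$ as the least degree of a second factor over all such factorizations of $tt'$, this particular factorization forces $\ind_{\mathcal{O}}(tt') \le \tdeg{t}+k = \tdeg{t} + \ind_{\mathcal{O}}(t')$.

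I expect the only genuinely delicate point to be the mismatch between the exact-degree requirement $t_2 \in T([n])_k$ in statement (a) and the at-most-degree condition that the nested borders naturally produce, namely the union over $i \le k$. The observation that taking the minimum over a set and over its upward closure yields the same value dissolves this discrepancy, and the nonemptiness of $A$ via the trivial factorization $t = t\cdot 1$ guarantees that the minimum exists; everything else is bookkeeping with the earlier lemma.
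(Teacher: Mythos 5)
Your proof is correct, but there is no argument in the paper to compare it against step by step: for this lemma the paper gives no proof at all, deferring entirely to the cited work of Kehrein and Kreuzer. What you supply is a genuine derivation, and it is legitimately placed within the paper's logical structure, since it rests only on the earlier border lemma (itself also cited rather than proved), whose parts you combine by induction on \(k\) into the key identity \(\overline{\partial^k\mathcal{O}} = T([n])_{<k+1}\cdot\mathcal{O}\). From there your resolution of the exact-degree versus at-most-degree mismatch --- the least element of the upward closure of a nonempty set of natural numbers equals the least element of the set itself --- is exactly the right bookkeeping, and part (b) follows correctly from (a) by exhibiting the factorization \(tt' = t_1'\,(t\,t_2')\). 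Two small remarks. First, the nonemptiness of your set \(A\) via \(t = 1\cdot t\) silently uses \(1 \in \mathcal{O}\); this is automatic for a nonempty order ideal (closure under division), and for \(\mathcal{O} = \emptyset\) the index is undefined anyway, so the assumption is harmless but worth stating. Second, where you attribute \(\overline{\partial^k\mathcal{O}} = \partial^k\mathcal{O} \cup \overline{\partial^{k-1}\mathcal{O}}\) to part (b) of the preceding lemma, this is in fact the paper's definition of the closed higher border, so no lemma is needed for that step. In short, your approach buys the paper a self-contained proof where it previously had only a citation, at the modest cost of still leaning on the cited description \(\partial^k \mathcal{O} = T([n])_k \cdot \mathcal{O} \setminus \left( T([n])_{<k} \cdot \mathcal{O} \right)\) of the higher borders.
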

  
  We can also note that
  \begin{lemma}
    \begin{equation}
      \label{eq:rec}
      \ind_{\mathcal{O}}(t) =
        \begin{cases}
          0 & t \in \mathcal{O} \\
          \min ( 
          \setsuchas{1+\ind_{\mathcal{O}}(\frac{t}{x_i})}{\divides{x_i}{t}}
          )
          &
          t \not \in \mathcal{O} 
        \end{cases}
    \end{equation}
  \end{lemma}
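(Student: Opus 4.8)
The plan is to deduce the recursion entirely from the characterization of the index in Lemma~\ref{lemma:Kehrein}(a), namely that $\ind_{\mathcal{O}}(t)$ equals the least degree $\tdeg{t_2}$ over all factorizations $t = t_1 t_2$ with $t_1 \in \mathcal{O}$. The base case is immediate: if $t \in \mathcal{O}$, take $t_1 = t$ and $t_2 = 1$, a cofactor of degree $0$, so $\ind_{\mathcal{O}}(t) = 0$. For the recursive case, assume $t \not\in \mathcal{O}$; since $\mathcal{O}$ is a nonempty order ideal it contains $1$, so $t \ne 1$ and at least one variable divides $t$, which guarantees that the minimand on the right-hand side ranges over a nonempty set.

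For the inequality $\ind_{\mathcal{O}}(t) \le \min_{\divides{x_i}{t}}\bigl(1 + \ind_{\mathcal{O}}(t/x_i)\bigr)$, I would fix any $x_i$ with $\divides{x_i}{t}$ and write $t = x_i \cdot (t/x_i)$. Then Lemma~\ref{lemma:Kehrein}(b), applied with the two factors $x_i$ and $t/x_i$, gives $\ind_{\mathcal{O}}(t) \le \tdeg{x_i} + \ind_{\mathcal{O}}(t/x_i) = 1 + \ind_{\mathcal{O}}(t/x_i)$. Taking the minimum over all such $x_i$ yields the desired bound.

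For the reverse inequality, set $k = \ind_{\mathcal{O}}(t)$ and invoke Lemma~\ref{lemma:Kehrein}(a) to obtain a factorization $t = t_1 t_2$ with $t_1 \in \mathcal{O}$ and $\tdeg{t_2} = k$. Because $t \not\in \mathcal{O}$ we must have $k \ge 1$: a factorization with $t_2 = 1$ would force $t = t_1 \in \mathcal{O}$. Hence $t_2 \ne 1$, so some variable $x_i$ divides $t_2$ (and thus $t$); writing $t_2 = x_i t_2'$ gives $t/x_i = t_1 t_2'$ with $t_1 \in \mathcal{O}$ and $\tdeg{t_2'} = k - 1$, so $\ind_{\mathcal{O}}(t/x_i) \le k - 1$ by Lemma~\ref{lemma:Kehrein}(a) again. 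Therefore $1 + \ind_{\mathcal{O}}(t/x_i) \le k = \ind_{\mathcal{O}}(t)$ for this particular $x_i$, which suffices to bound the minimum from above.

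The step I expect to require the most care is this reverse inequality, specifically the observation that $t \not\in \mathcal{O}$ forces the optimal cofactor $t_2$ to have positive degree. That is what lets me peel off a single variable and land on a genuine proper divisor $t/x_i$, keeping the argument inside $T([n])$ and making the recursion well founded; everything else is bookkeeping with the two parts of Lemma~\ref{lemma:Kehrein}.
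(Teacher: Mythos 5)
Your proof is correct. Note that the paper itself gives no proof of this lemma at all --- it is introduced only by ``We can also note that'' and stated bare --- so there is no argument of the author's to compare against; your derivation is the natural way to fill that gap. Both halves are sound: Lemma~\ref{lemma:Kehrein}(b) with the factorization $t = x_i\cdot(t/x_i)$ gives $\ind_{\mathcal{O}}(t) \le 1+\ind_{\mathcal{O}}(t/x_i)$ for every $x_i$ dividing $t$, and Lemma~\ref{lemma:Kehrein}(a) applied to a minimal factorization $t=t_1t_2$, $t_1\in\mathcal{O}$, $\tdeg{t_2}=\ind_{\mathcal{O}}(t)$, lets you peel one variable off $t_2$ (possible precisely because $t\notin\mathcal{O}$ forces $\tdeg{t_2}\ge 1$) to get the reverse inequality. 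You also correctly flagged the two points that genuinely need care: the minimand set is nonempty since $1\in\mathcal{O}$ forces $t\neq 1$, and the positivity of $\tdeg{t_2}$ is what makes the recursion well founded. (Incidentally, you silently corrected the paper's typo: the statement's $\frac{x_i}{t}$ should of course read $t/x_i$, which is how you treated it.)
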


  \begin{example}\label{ex:311}
   Let \(\mathcal{O}=\set{1,x_1,x_1^2,x_2,x_2^2}\). 
    Then the function \(\ind_{\mathcal{O}}\) looks as follows:
    \begin{displaymath}
      \begin{matrix}
        5 & 6 & 7 & 8 & 9 & 10 & 11 & 12 \\ 
        4 & 5 & 6 & 7 & 8 & 9 & 10 & 11  \\
        3 & 4 & 5 & 6 & 7 & 8 & 9 & 10   \\
        2 & 3 & 4 & 5 & 6 & 7 & 8 & 9    \\
        1 & 2 & 3 & 4 & 5 & 6 & 7 & 8    \\
        0 & 1 & 2 & 3 & 4 & 5 & 6 & 7    \\
        0 & \vek{1} & 1 & 2 & 3 & 4 & 5 & 6    \\
        0 & 0 & 0 & 1 & 2 & 3 & 4 & 5
      \end{matrix}
    \end{displaymath}
  \end{example}

  Using the natural identification
  \begin{displaymath}
    \begin{split}
      T([n]) & \to \Nat^n \\
      \vek{x}^{\vek{\alpha}} & \mapsto \vek{\alpha}
    \end{split}
  \end{displaymath}
  we will, when convenient, regard the index as the function
  \begin{displaymath}
    \ind_{\mathcal{O}}: \Nat^n \to \Nat
  \end{displaymath}
  which is the unique solution to 
  \begin{equation}
  \label{eq:diffeq}
  \ind_{\mathcal{O}}(a_1,\dots,a_n) =
  \begin{cases}
    0 & (a_1,\dots,a_n) \in \mathcal{O}, \\
    \displaystyle \min_{1 \le i \le n}
    \setsuchas{1+\ind_{\mathcal{O}}(a_1,\dots,a_i-1,\dots,a_n)}{a_i>0} & (a_1,\dots,a_n) \not \in \mathcal{O}.
  \end{cases}
    \end{equation}
\end{section}

\begin{section}{Generating functions for the index}

  We want to calculate the generating function for the index of a finite order ideal.
  \begin{definition}
    If \(\mathcal{O} \subset T([n])\) is a finite order ideal, then we
    define
    \begin{equation}
      \label{eq:6}
      \Ind_{\mathcal{O}}(y_1,\dots,y_n) = \sum_{\alpha \in \Nat^n}
      \ind_\mathcal{O}(x^\alpha) y^\alpha
    \end{equation}
  \end{definition}

\begin{subsection}{Dimension 2}  
  If \(n=2\), a finite order ideal \(\mathcal{O}\) may be encoded by a
  (number) partition 
  \(\lambda=(\lambda_1,\dots, \lambda_m)\), as
  \begin{equation}
    \label{eq:2dim}
    t=x_1^{\alpha_1}x_2^{\alpha_2} \in \mathcal{O} \qquad \iff \qquad 
    \alpha_1 <m \text{ and } \alpha_2 < \lambda_{\alpha_1+1}
  \end{equation}
  The minimal axis-parallel rectangle containing \(\mathcal{O}\) is
  called the \emph{bounding rectangle} of \(\mathcal{O}\). Clearly,
  the upper right corner of the bounding rectangle has coordinates
  \((m-1,\lambda_1 -1)\).
  
  \begin{lemma}\label{lemma:increase2}
    Let \(\mathcal{O}\) be a finite order ideal in \(T([2])\), given
    by a partition \((\lambda_1,\dots,\lambda_m)\) as in
    \eqref{eq:2dim}. Then 
    \begin{enumerate}
    \item     If \(i > m-1\) then \(\ind_\mathcal{O}(i+k,j)=
      \ind_\mathcal{O}(i,j)+k\).  
    \item     If \(j > \lambda_1-1\) then \(\ind_\mathcal{O}(i,j+k)=
      \ind_\mathcal{O}(i,j)+k\). 
    \item     If  \(i > m-1\) and \(j > \lambda_1-1\) then then
    \(\ind_\mathcal{O}(i+k,j+\ell)=
    \ind_\mathcal{O}(i,j)+k+\ell\).
    \end{enumerate}
  \end{lemma}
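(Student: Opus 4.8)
The plan is to reduce all three statements to a single monotonicity fact about degrees of divisors, using the characterization of the index in Lemma~\ref{lemma:Kehrein}(a). Writing $\ind_\mathcal{O}(i,j)$ for $\ind_\mathcal{O}(x_1^i x_2^j)$, that lemma identifies $\ind_\mathcal{O}(i,j)$ with the smallest $k$ for which $x_1^i x_2^j = t_1 t_2$ with $t_1 \in \mathcal{O}$ and $\tdeg{t_2}=k$; equivalently, with $(i+j)$ minus the largest degree of a divisor of $x_1^i x_2^j$ lying in $\mathcal{O}$. So I would set
\[
  \Phi(i,j) = \max \setsuchas{\beta_1+\beta_2}{x_1^{\beta_1}x_2^{\beta_2} \in \mathcal{O}, \ \beta_1 \le i, \ \beta_2 \le j},
\]
a maximum over a nonempty set because $1 \in \mathcal{O}$, and record the clean formula $\ind_\mathcal{O}(i,j) = (i+j) - \Phi(i,j)$. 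Each part of the lemma then becomes the assertion that $\Phi$ is \emph{unchanged} when the target is pushed further out along a coordinate direction in which it has already escaped the bounding rectangle.

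For (1), I would use the fact recorded just before the lemma, that the bounding rectangle has upper right corner $(m-1,\lambda_1-1)$, so that every $x_1^{\beta_1}x_2^{\beta_2} \in \mathcal{O}$ has $\beta_1 \le m-1$. Hence, as soon as $i > m-1$, both inequalities $\beta_1 \le i$ and $\beta_1 \le i+k$ hold automatically for every element of $\mathcal{O}$, so the feasible sets defining $\Phi(i,j)$ and $\Phi(i+k,j)$ are literally the same set (cut out by $\beta_2 \le j$ alone). Thus $\Phi(i+k,j)=\Phi(i,j)$, and subtracting this common value from $i+k+j$ versus $i+j$ gives $\ind_\mathcal{O}(i+k,j)=\ind_\mathcal{O}(i,j)+k$. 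Statement (2) is the mirror image: every element of $\mathcal{O}$ has second exponent $\beta_2 \le \lambda_1-1$, so once $j > \lambda_1-1$ the constraint $\beta_2 \le j$ is vacuous and the identical argument yields $\Phi(i,j+k)=\Phi(i,j)$. Statement (3) I would then obtain by composing the first two, since both their hypotheses are in force: applying (1) with second coordinate $j+\ell$ and then (2) gives
\[
  \ind_\mathcal{O}(i+k,j+\ell) = \ind_\mathcal{O}(i,j+\ell)+k = \ind_\mathcal{O}(i,j)+\ell+k.
\]

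I do not expect a real obstacle; the only step needing care is the reduction to $\Phi$. One must be sure that Lemma~\ref{lemma:Kehrein}(a) genuinely turns the index into a degree deficiency, so that enlarging an \emph{unconstrained} coordinate of the target raises the index by exactly that amount, and that the bounding-rectangle bounds really confine all of $\mathcal{O}$ to $\beta_1 \le m-1$ and $\beta_2 \le \lambda_1-1$, so the relevant constraint truly disappears rather than merely loosens. As a sanity check against the table in Example~\ref{ex:311} (where $m=3$ and $\lambda_1=3$), one sees the index increase by one per step in each row with $j \ge 3$ and each column with $i \ge 3$. An alternative route avoiding $\Phi$ is induction on $k$ through the recurrence \eqref{eq:rec}: for $i \ge m$ the term $x_1^{i+1}x_2^j$ lies outside $\mathcal{O}$, and one checks that the minimum in \eqref{eq:rec} is attained by the $x_1$-divisor, giving $\ind_\mathcal{O}(i+1,j)=1+\ind_\mathcal{O}(i,j)$; but the divisor formulation is cleaner and disposes of all $k$ and $\ell$ at once.
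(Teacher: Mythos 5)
Your proof is correct, but it takes a genuinely different route from the paper's. The paper proves part (1) by induction on \(j\) via the recurrence \eqref{eq:rec}: for \(i>m-1\) it writes \(\ind_\mathcal{O}(i+1,j)=1+\min(\ind_\mathcal{O}(i,j),\ind_\mathcal{O}(i+1,j-1))\), and the case where the minimum is attained by the second argument is ruled back into the desired equality using Lemma~\ref{lemma:Kehrein}(b) (this is the little picture in Remark~\ref{rem:man}); parts (2) and (3) then follow symmetrically. You instead invoke Lemma~\ref{lemma:Kehrein}(a): since \(t_2=t/t_1\) has degree \(\tdeg{t}-\tdeg{t_1}\), minimizing \(\tdeg{t_2}\) over divisors \(t_1\in\mathcal{O}\) of \(t\) is the same as maximizing \(\tdeg{t_1}\), so \(\ind_\mathcal{O}(i,j)=(i+j)-\Phi(i,j)\), with the feasible set defining \(\Phi\) nonempty because \(1\in\mathcal{O}\). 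The bounding-rectangle bounds \(\beta_1\le m-1\) and \(\beta_2\le\lambda_1-1\) do hold for every element of \(\mathcal{O}\) (the first directly from \eqref{eq:2dim}, the second because \(\lambda\) is weakly decreasing), so once \(i>m-1\) the constraint \(\beta_1\le i\) is vacuous and \(\Phi(i+k,j)=\Phi(i,j)\); this disposes of all \(k\) at once, with no induction, and (3) follows by composing (1) and (2) exactly as you say. What your argument buys: it is shorter, uniform in \(k\) and \(\ell\), and generalizes verbatim to \(T([n])\) --- if \(\alpha_i>m_i\) for \(i\in S\) and \(\beta\) is supported on \(S\), then every constraint indexed by \(S\) is vacuous, so \(\Phi(\alpha+\beta)=\Phi(\alpha)\) --- which would also yield Lemma~\ref{lemma:increaseGeneral} essentially for free, whereas the paper there runs another coordinate-by-coordinate induction. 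What the paper's route buys is a proof that stays entirely within the local recurrence \eqref{eq:rec} and shows how index values propagate cell by cell; but in terms of logical economy your divisor formulation is the cleaner argument.
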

  \begin{proof}
    We prove the first assertion by induction over \(j\). If \(j=0\)
    then it follows from \eqref{eq:rec} that
    \[\ind_{\mathcal{O}}(i+1,0) = 1 + \ind_{\mathcal{O}}(i,0),\] hence
    that 
    \[\ind_\mathcal{O}(i+k,0)= \ind_\mathcal{O}(i,0)\] for all
    positive \(k\).
    Assume that the assertion holds for \(j<p\). Then for \(j=p\) we
    have that 
    \[\ind_{\mathcal{O}}(i+1,j) = 1 + \min(\ind_{\mathcal{O}}(i,j),
    \ind_{\mathcal{O}}(i+1,j-1)).\]
    Thus, either (or both) of the conditions 
    \[
    \ind_{\mathcal{O}}(i+1,j) = 1+ \ind_{\mathcal{O}}(i,j), \quad
    \ind_{\mathcal{O}}(i+1,j) = 1+ \ind_{\mathcal{O}}(i+1,j-1)
    \]
    hold. If the first condition holds, we are through. If
    \[
    \ind_{\mathcal{O}}(i+1,j) = 1+ \ind_{\mathcal{O}}(i+1,j-1)
    \]
    then by the induction hypothesis
    \[
    \ind_{\mathcal{O}}(i+1,j) = 1+ \ind_{\mathcal{O}}(i+1,j-1) = 1 +
    1+ \ind_{\mathcal{O}}(i,j-1)
    \]
    hence 
    Lemma~\ref{lemma:Kehrein} gives that 
    that 
    \[\ind_{\mathcal{O}}(i+1,j) = 1+ \ind_{\mathcal{O}}(i,j) \]

    The second assertion is proved in exactly the same way, and the
    third assertion follows from the first two.
  \end{proof}

  \begin{remark}\label{rem:man}
    The following figure illustrates the manipulations in the proof:
    \begin{displaymath}
      \begin{matrix}
        b & c \\
        a & a+1
      \end{matrix}
    \end{displaymath}
    We know that \(c=1+\min(b,a+1)\) and that \(c \le a+2\). It
    follows that \(c=b+1\).
  \end{remark}

  \begin{theorem}\label{thm:IndDim2}
    Let \(\mathcal{O} \subset T([2])\) be a finite order ideal given by
    a partition 
    \(\lambda=(\lambda_1,\dots,\lambda_m)\) as in \eqref{eq:2dim}.
    Put
    \begin{equation}
      \label{eq:P12}
      \begin{split}
        P_0(;;b) &= b \\
        P_1(y_1;1;b) &= \sum_{i_1=0}^\infty (i_1 +b)y_1^{i_1} =
        \frac{b-(b-1)y_1}{(1-y_1)^2} \\  
        P_2(y_1,y_2;1,1;b) &= \sum_{i_1=0}^\infty
        \sum_{i_2=0}^\infty (i_1 +i_2 b)y_1^{i_1}y_2^{i_2} \\
        &=
        \frac{b-(b-1)(y_1+y_2)+(b-2)(y_1y_2)}{(1-y_1)^2(1-y_2)^2} 
      \end{split}
    \end{equation}
    Then
    \begin{equation}
      \label{eq:Ind2}
      \begin{split}
      \Ind_{\mathcal{O}}(y_1,y_2) 
      & = y_1^my_2^{\lambda_1}
      P_2(y_1,y_2;1,1;\ind_{\mathcal{O}}(m,\lambda_1)) \,+ \\
      & 
      \sum_{j=0}^{m-1}
      y_1^jy_2^{\lambda_1}P_1(y_2;1;\ind_{\mathcal{O}}(j,\lambda_1)) \,+ \\
      & \sum_{i=0}^{\lambda_1-1} y_1^m y_2^j
      P_1(y_1;1;\ind_{\mathcal{O}}(i,\lambda_1-1)) \, +\\
      & \sum_{\substack{x_1^{a_1}x_2^{a_2} \in T([2]) \setminus \mathcal{O}\\
        \, a_1 <m \\ a_2 < \lambda_1}} y_1^{a_1} y_2^{a_2}
      P_0(;;\ind_{\mathcal{O}}(a_1,a_2)) 
      \end{split}
    \end{equation}
    In particular, it is a rational function, with a denominator which
    divides \((1-y_1)^2(1-y_2)^2\).
  \end{theorem}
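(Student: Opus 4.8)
The plan is to partition the lattice $\Nat^2$ into four regions determined by the bounding rectangle and to identify the generating function of $\ind_{\mathcal{O}}$ over each region with one of the prototypes $P_0$, $P_1$, $P_2$. Using the thresholds $a_1 = m$ and $a_2 = \lambda_1$, I would split $\Nat^2$ into the far quadrant $\setsuchas{(a_1,a_2)}{a_1 \ge m, \ a_2 \ge \lambda_1}$, the vertical strip $\setsuchas{(a_1,a_2)}{0 \le a_1 \le m-1, \ a_2 \ge \lambda_1}$, the horizontal strip $\setsuchas{(a_1,a_2)}{a_1 \ge m, \ 0 \le a_2 \le \lambda_1-1}$, and the rectangle $\setsuchas{(a_1,a_2)}{0 \le a_1 \le m-1, \ 0 \le a_2 \le \lambda_1-1}$. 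These four sets form a disjoint cover of $\Nat^2$, so $\Ind_{\mathcal{O}}$ is the sum of the four corresponding partial generating functions, matching the four lines of \eqref{eq:Ind2}.

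On each unbounded region I would use Lemma~\ref{lemma:increase2} to reduce the index to a single boundary value. For the far quadrant, part~(3) (with base point $(m,\lambda_1)$, allowed since $m > m-1$ and $\lambda_1 > \lambda_1-1$) gives $\ind_{\mathcal{O}}(m+k, \lambda_1+\ell) = \ind_{\mathcal{O}}(m,\lambda_1) + k + \ell$; factoring out $y_1^m y_2^{\lambda_1}$ and summing over $k,\ell \ge 0$ yields exactly $y_1^m y_2^{\lambda_1} P_2(y_1,y_2;1,1;\ind_{\mathcal{O}}(m,\lambda_1))$. For the vertical strip, part~(2) gives in each column $j < m$ that $\ind_{\mathcal{O}}(j, \lambda_1+\ell) = \ind_{\mathcal{O}}(j,\lambda_1)+\ell$, contributing $y_1^j y_2^{\lambda_1} P_1(y_2;1;\ind_{\mathcal{O}}(j,\lambda_1))$, whose sum over $j$ is the second line. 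The horizontal strip is handled symmetrically via part~(1): in each row $i < \lambda_1$ one has $\ind_{\mathcal{O}}(m+k, i) = \ind_{\mathcal{O}}(m,i) + k$, contributing $y_1^m y_2^i P_1(y_1;1;\ind_{\mathcal{O}}(m,i))$ and giving the third line. Finally, the rectangle contributes only finitely many monomials; those lying in $\mathcal{O}$ have index $0$ and drop out, so only the terms of $T([2]) \setminus \mathcal{O}$ inside the rectangle survive, each weighted by $P_0(;;\ind_{\mathcal{O}}(a_1,a_2)) = \ind_{\mathcal{O}}(a_1,a_2)$, which is the fourth line.

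To finish I would verify the closed forms in \eqref{eq:P12} using the geometric identities $\sum_i y^i = (1-y)^{-1}$ and $\sum_i i\,y^i = y(1-y)^{-2}$; for $P_2$ this is a direct computation over the common denominator $(1-y_1)^2(1-y_2)^2$, whose numerator collects to $b-(b-1)(y_1+y_2)+(b-2)y_1y_2$. Rationality and the denominator claim then follow at once, since $P_0$ is constant, $P_1(y_j;1;\cdot)$ has denominator $(1-y_j)^2$, and $P_2$ has denominator $(1-y_1)^2(1-y_2)^2$, so the finite sum in \eqref{eq:Ind2} is rational with denominator dividing $(1-y_1)^2(1-y_2)^2$.

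The only genuinely delicate point is confirming that Lemma~\ref{lemma:increase2} applies with exactly the right base points, so that each recursion begins precisely on an edge of the bounding rectangle and the four regions partition $\Nat^2$ without overlap or omission. Once the partition and the three boundary reductions are pinned down, the remaining steps are routine summations.
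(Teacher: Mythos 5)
Your proof is correct and takes essentially the same approach as the paper: the same partition of the plane into the four regions determined by the bounding rectangle (far quadrant, vertical strip, horizontal strip, and the rectangle itself), with Lemma~\ref{lemma:increase2} reducing the index on each unbounded region to its values on the boundary of the box. You supply the summation details that the paper's one-line proof omits, and your version of the third line, \(y_1^m y_2^i P_1(y_1;1;\ind_{\mathcal{O}}(m,i))\), silently corrects the index typos in the stated formula \eqref{eq:Ind2}.
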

  \begin{proof}
    Since points in \(T([2]) \setminus \mathcal{O}\) can be partitioned into
    \begin{enumerate}
    \item Points to the right and above \(B\),
    \item Points above, but not to the right of, \(B\),
    \item Points to the right of, but not above, \(B\), and
    \item Points inside \(B\) but not in \(\mathcal{O}\),
    \end{enumerate}
    the identity \eqref{eq:Ind2} follows from
    Lemma~\ref{lemma:increase2}.
  \end{proof}

  If \(\mathcal{O}\) is empty, then \(\ind(a,b)=a+b-1\), which is 
  id:A002024 in \cite{OEIS},
  and the
  generating function is \(P_2(y_1,y_2;1,1;1)\).

\end{subsection}
\begin{subsection}{Higher dimensions}

  \begin{definition}\label{def:P}
    Let \(n\) be a positive integer and let \(a_1,\dots,a_n,b\) be
    rational numbers. Define 
    \begin{equation}
      \label{eq:P}
      P_n(x_1,\dots,x_n;a_1,\dots,a_n;b) = \sum_{\alpha \in \Nat^n}
      (a_1\alpha_1 +\dots + a_n \alpha_n + b) x^\alpha 
    \end{equation}
    We define \(P_0(;;b) = b\).
  \end{definition}

  \begin{theorem}\label{thm:Pn}
    Let \(n\) be a positive integer and let \(a_1,\dots,a_n,b\) be
    rational numbers. Then
    the following identity of 
    formal power series in \(\Q[[x_1,\dots,x_n]]\) holds:
    \begin{equation}
      \label{eq:2}
      P_n(x_1,\dots,x_n;a_1,\dots,a_n;b) = 
      \prod_{i=1}^n(1-x_i)^{-2} \sum_{A \subseteq [n]} \Bigl[
      (-1)^{\tdeg{A}}(b-\sum_{j \in A} a_j) \prod_{j \in A} x_j \Bigr]
    \end{equation}
    In particular, putting \(a_1=a_2 = \cdots = a_n = 1\) we have that
    \begin{equation}
      \label{eq:1}
      \begin{split}
      P_n(x_1,\dots,x_n;1,\dots,1;b) &=
      \sum_{\alpha \in \Nat^n} (\tdeg{\alpha} + b) x^\alpha \\
      &=
      \prod_{i=1}^n(1-x_i)^{-2} \sum_{A \subseteq [n]} \Bigl[
      (-1)^{\tdeg{A}}(b-\tdeg{A}) \prod_{j \in A} x_j \Bigr]
      \end{split}
    \end{equation}
  \end{theorem}
  \begin{proof}
    Put
    \begin{displaymath}
      Q_n(x_1,\dots,x_n;a_1,\dots,a_n;b) =
      \prod_{i=1}^n(1-x_i)^{-2} \sum_{A \subseteq [n]} \Bigl[  
      (-1)^{\tdeg{A}}(b-\sum_{j \in A} a_j) \prod_{j \in A} x_j \Bigr] 
    \end{displaymath}
    Then
    \begin{displaymath}
      \begin{split}
        P_1(x_1;a_1,b) &= \sum_{\alpha_1=0}^\infty
        (a_1\alpha_1 b) x_1^{\alpha_1} \\
        &= a_1 x_1(1-x_1)^{-2} + b(1-x_1)^{-1} \\
        &= (b-(b-a_1)x_1)(1-x_1)^{-2}
      \end{split}
    \end{displaymath}
    Now assume, by induction, that \(P_{n-1}=Q_{n-1}\). 
    Let 
    \begin{displaymath}
      \begin{split}
        \vek{\alpha} & =(\alpha_1,\dots,\alpha_n) \\
        \widetilde{\vek{\alpha}} &= (\alpha_1,\dots,\alpha_{n-1}) \\
        \vek{a} & = (a_1,\dots,a_n) \\
        \widetilde{\vek{a}} &= (a_1,\dots,a_{n-1})\\
        \vek{x} & = (x_1,\dots,x_n) \\
        \widetilde{\vek{x}} &= (x_1,\dots,x_{n-1})
      \end{split}
    \end{displaymath}
    and let \(\bullet\) denote the ordinary scalar product.
    Then
    \begin{displaymath}
      \begin{split}
        P_n &= P_n(\vek{x};\vek{a};b) \\
        &= \sum_{\vek{\alpha} \in \Nat^n}
        (\vek{a} \bullet \vek{\alpha} +b) \vek{x}^{\vek{\alpha}} \\
        &= \sum_{\alpha_n=0}^\infty \sum_{\overline{\vek{\alpha}} \in
          \Nat^{n-1}}
        (\overline{\vek{a}} \bullet \overline{\vek{\alpha}} + a_n\alpha_n
        +b) \overline{\vek{x}}^{\overline{\vek{\alpha}}} x_n^{\alpha_n} \\
        &=\sum_{\alpha_n=0}^\infty x_n^{\alpha_n}
        \sum_{\overline{\vek{\alpha}} \in \Nat^{n-1}}
        P_{n-1}(\overline{\vek{x}}; \overline{\vek{a}};a_n \alpha_n +b)
      \end{split}
    \end{displaymath}  
    By the induction hypothesis, this is 
    \begin{displaymath}
      \sum_{\alpha_n=0}^\infty x_n^{\alpha_n} 
      \Biggl[
      \prod_{i=1}^{n-1}(1-x_1)^{-2} \sum_{B \subseteq 
        [n-1]} (-1)^{\tdeg{B}} (a_n \alpha_n +b - \sum_{j \in B}
      a_j) \prod_{j \in B} x_j 
      \Biggr] 
    \end{displaymath}
    which we write as 
    \begin{displaymath}
       \prod_{i=1}^{n-1}(1-x_1)^{-2} (S_1+S_2) 
     \end{displaymath}
     with 
     \begin{displaymath}
       S_1 =         (1-x_n)^{-1}
       \sum_{B \subseteq 
         [n-1]} (-1)^{\tdeg{B}} (b - \sum_{j \in B}
       a_j) \prod_{j \in B} x_j 
     \end{displaymath}
     and
     \begin{displaymath}
       \begin{split}
       S_2&= \sum_{\alpha_n=0}^\infty x_n^{\alpha_n}
        \sum_{B \subseteq [n-1]} (-1)^{\tdeg{B}} (a_n \alpha_n )
        \prod_{j \in B} x_j  \\
        &= \sum_{B \subseteq [n-1]} (-1)^{\tdeg{B}}   a_n
        \sum_{\alpha_n=0}^\infty \alpha_nx_n^{\alpha_n} \prod_{j
          \in B} x_j \\
        &= a_n \frac{x_n}{(1-x_n)^2} \sum_{B \subseteq [n-1]} (-1)^{\tdeg{B}} 
        \prod_{j \in B} x_j  
       \end{split}
     \end{displaymath}
     So
     \begin{displaymath}
       \begin{split}
        P &= \prod_{i=1}^n (1-x_i)^{-2} \Biggl[ \left(
        (1-x_n) \sum_{B \subseteq 
          [n-1]} (-1)^{\tdeg{B}} (b - \sum_{j \in B}
        a_j) \prod_{j \in B} x_j 
        \right)
        + \\
        & \qquad
        \left(
          a_n x_n \sum_{B \subseteq [n-1]} (-1)^{\tdeg{B}} 
        \prod_{j \in B} x_j
          \right)
        \Biggr] \\
        &= \prod_{i=1}^n (1-x_i)^{-2} 
        \sum_{B \subseteq 
          [n-1]} \Biggl[ (-1)^{\tdeg{B}} \left( a_nx_n + (1-x_n)(b - \sum_{j \in
            B} a_j)\right) \prod_{j \in B} x_j \Biggr] \\
        &=       \prod_{i=1}^n(1-x_i)^{-2} \sum_{A \subseteq [n]} \Bigl[
      (-1)^{\tdeg{A}}(b-\sum_{j \in A} a_j) \prod_{j \in A} x_j \Bigr]         
       \end{split}
     \end{displaymath}
    where the last equality is obtained by considering, for any \(B
    \subset [n-1]\), the two subsets \(B, B \cup \set{n} \subseteq [n]\).
  \end{proof}
  
  We can also note that 
  \begin{equation}
    \label{eq:3}
    \sum_{\alpha \in \Nat^n} (a_1\alpha_1 +\dots + a_n \alpha_n + b)
    x^\alpha =
    \frac{\partial}{\partial t} \left( t^b \prod_{i=1}^n (1-t^{a_i}x_i)^{-1}
    \right) \Biggl \lvert_{t=1} 
  \end{equation}
  
  We  recall the
  following well-known results:  
  
  \begin{theorem}
    \begin{enumerate}[(i)]
    \item If \(\mathcal{O} \subset T([n])\) is an order ideal, then the
      complement \(I = \mathcal{O} \setminus T([n])\) is a monoid
      ideal in \(T([n])\); i.e., \(T([n]) I \subseteq
      I\). Furthermore, \(I\) has a unique minimal generating set,
      which is finite.
    \item There exists a  (not necessarily unique) partition (i.e.
      disjoint union)
      \begin{equation}
        \label{eq:Idec}
        I = \cup_{j=1}^m u_j T(S_j), \qquad u_j \in I, \,  S_j \subseteq [n]
      \end{equation}
      with exactly one of the \(S_j\)'s is equal to \([n]\).
    \end{enumerate}    
  \end{theorem}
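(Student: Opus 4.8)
The plan is to handle the two parts separately: part (i) will follow from the order-ideal axiom together with Dickson's lemma, and part (ii) from a recursive slicing in the last variable that produces the cone decomposition and pins down the number of full cones simultaneously.

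For (i) I would first record that the complement respects the order-ideal axiom by contraposition: if \(t \in I = T([n]) \setminus \mathcal{O}\) and \(s \in T([n])\), then \(st \in \mathcal{O}\) together with \(\divides{t}{st}\) would force \(t \in \mathcal{O}\), contradicting \(t \in I\); hence \(st \in I\) and \(T([n]) I \subseteq I\). The natural candidate for the minimal generating set is the set of \(\divides\)-minimal elements of \(I\), i.e.\ those \(t \in I\) with no proper divisor in \(I\). These form an antichain in the divisibility order on \(T([n]) \cong \Nat^n\), so finiteness is exactly Dickson's lemma. Uniqueness follows because every element of \(I\) is a multiple of some minimal element (descend along divisors while remaining in \(I\)), so the minimal elements generate, and any generating set is forced to contain each of them.

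For (ii) I would induct on \(n\), phrasing the inductive statement for an arbitrary nonempty monoid ideal (which, by the complementation bijection implicit in (i), is the same data as a proper order ideal): every nonempty monoid ideal in \(T([k])\) admits a finite disjoint decomposition into cones \(u T(S)\) with exactly one \(S=[k]\). The base case \(n=1\) is immediate, since such an ideal is \(x_1^d T([1])\). For the step I slice by the last exponent: set \(I_k = \setsuchas{t \in T([n-1])}{t x_n^k \in I}\), observe that each \(I_k\) is a monoid ideal and that \(I_0 \subseteq I_1 \subseteq \cdots\), and invoke Dickson to stabilise the chain at some \(I_K\). This gives the disjoint union \(I = \left(\bigcup_{k=0}^{K-1} I_k x_n^k\right) \cup I_K x_n^K T(\set{n})\), where the \(x_n\)-degree keeps the pieces separated. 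Applying the induction hypothesis to each \(I_k\) and pushing the resulting \((n-1)\)-cones through by \(x_n^k\) handles the first group, all of whose supports lie in \([n-1]\); for the tail I adjoin \(n\) to each support, so a tail cone has full support \([n]\) precisely when the corresponding cone of \(I_K\) had full support \([n-1]\).

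The count of full cones then comes for free. The first group contributes none, and since \(I\) is nonempty and finitely generated it contains \(g T([n])\) for a generator \(g\), which forces \(I_K \neq \emptyset\); by the induction hypothesis \(I_K\) has exactly one full \([n-1]\)-cone, so \(I\) has exactly one full \([n]\)-cone. I expect the existence step — setting up the slicing, the stabilisation, and the disjointness of the tail — to be the main obstacle, after which multiplicity one is pure bookkeeping. As an independent check (for finite \(\mathcal{O}\)) I would verify the count by a generating-function evaluation: writing \(H_I(y) = \prod_i (1-y_i)^{-1} - H_{\mathcal{O}}(y)\) on one hand and \(H_I(y) = \sum_j y^{u_j}\prod_{i \in S_j}(1-y_i)^{-1}\) on the other, then multiplying through by \(\prod_i(1-y_i)\) and setting \(y_i=1\) isolates the number of full cones on the right, while the left side collapses to \(1\) because \(H_{\mathcal{O}}\) is a polynomial and is annihilated at \(y_i=1\) by the factor \(\prod_i(1-y_i)\).
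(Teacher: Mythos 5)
Your proposal is correct, but it takes a genuinely different route from the paper: the paper offers no proof at all, simply attributing part (i) to Dickson's Lemma and part (ii) to a classical result of Riquier (with pointers to Janet and Thomas). You instead construct everything from scratch. After the standard contraposition-plus-Dickson argument for (i), you prove (ii) by induction on \(n\): slice \(I\) by the exponent of \(x_n\) into the chain \(I_0 \subseteq I_1 \subseteq \cdots\) of monoid ideals of \(T([n-1])\), stabilize the chain at some \(I_K\) (Dickson again, or by taking \(K\) to be the largest \(x_n\)-degree of a minimal generator of \(I\)), and assemble the disjoint union \(I = \bigl(\bigcup_{k=0}^{K-1} I_k x_n^k\bigr) \cup I_K x_n^K T(\set{n})\), applying the induction hypothesis slice-wise. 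This is in essence the classical construction underlying the Riquier/Janet decompositions that the paper cites, so the two routes are mathematically cognate; what yours buys is self-containedness and an explicit algorithm, together with a clean accounting of the ``exactly one \(S_j = [n]\)'' clause: the slices \(k < K\) only produce supports inside \([n-1]\), the tail produces exactly one full cone because \(I_K\) does, and \(I_K \neq \emptyset\) since \(I \supseteq g\,T([n])\) for any generator \(g\). The points that need care --- disjointness via the \(x_n\)-degree, stabilization of the chain, and nonemptiness of \(I_K\) (which uses that \(I \neq \emptyset\), i.e.\ that \(\mathcal{O}\) is a proper order ideal, as the theorem intends) --- are all handled correctly in your sketch. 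Your closing Hilbert-series check of the count, multiplying by \(\prod_i (1-y_i)\) and evaluating at \(y_i = 1\), is a legitimate independent confirmation for finite \(\mathcal{O}\), since after clearing denominators both sides are polynomials and may be evaluated there.
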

  \begin{proof}
    The first assertion is the so-called Dickson Lemma; the second is
    a result of Riquier \cite{Riq:Decomp}; see also Janet \cite{Janet:part} and Thomas \cite{Thomas:RET}.
      \end{proof}

      \begin{remark}
        Sometimes, a similar decomposition is called a Stanley decomposition
        \cite{stanley1982linear,apel2003conjecture}
  \end{remark}

  \begin{remark}
    In order to obtain a decomposition adapted to our purpose,
    we will accept some singletons in \eqref{eq:Idec}, i.e. some
    \(S_j=\emptyset\). 
  \end{remark}

  \begin{definition}
    Let \(\mathcal{O} \subset T([n])\) be a finite order ideal. A
    partition \eqref{eq:Idec} of \(T([n])\) is called
    \emph{admissible for \(\mathcal{O}\)} if, whenever
    the support of \(\vek{\beta}\) is contained in \(S_j\) it holds
    that 
    \begin{equation}
      \label{eq:indInc}
      \ind_{\mathcal{O}}(u_j\vek{x}^{\vek{\beta}}) =
      \ind_{\mathcal{O}}(u_j) + \tdeg{\vek{\beta}}    
    \end{equation}
  \end{definition}

  \begin{lemma}\label{lemma:admiss}
    Let \(\mathcal{O} \subset T([n])\) be a finite order ideal. If the 
    partition \eqref{eq:Idec} of \(T([n])\) is admissible for
    \(\mathcal{O}\), and \(u_j=\vek{x}^{\vek{\alpha_j}}\) then
    \begin{equation}
      \label{eq:IndAdmiss}
      \Ind_{\mathcal{O}}(y_1,\dots,y_n) = \sum_{j=1}^m
      \vek{y}^{\vek{\alpha_j}} P_{\tdeg{S_j}}(\setsuchas{y_\ell}{\ell
        \in S_j};1,\dots,1;\ind_{\mathcal{O}}(u_j)
    \end{equation}
  \end{lemma}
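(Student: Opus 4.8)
The plan is to substitute the admissible partition \eqref{eq:Idec} directly into the defining sum \eqref{eq:6} for \(\Ind_{\mathcal{O}}\) and to recognize each resulting block-sum as one of the functions \(P_n\) from Definition~\ref{def:P}. Since the decomposition is by hypothesis a disjoint union exhausting all of \(T([n])\) (the finitely many elements of \(\mathcal{O}\) being absorbed as singleton blocks with \(S_j=\emptyset\)), every power product \(\vek{x}^{\vek{\alpha}}\) lies in exactly one block \(u_j T(S_j)\). Hence I would first reindex \eqref{eq:6} as a double sum, the outer one over \(j\in\set{1,\dots,m}\) and the inner one over the power products lying in the \(j\)-th block.

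For a fixed block, the key observation is that the map \(\vek{\beta}\mapsto u_j\vek{x}^{\vek{\beta}}\) is a bijection from \(\setsuchas{\vek{\beta}\in\Nat^n}{\operatorname{supp}(\vek{\beta})\subseteq S_j}\) onto \(u_j T(S_j)\): with \(u_j=\vek{x}^{\vek{\alpha_j}}\) fixed, the cofactor \(\vek{x}^{\vek{\alpha}}/u_j\) is uniquely determined whenever it lies in \(T(S_j)\). Writing \(b=\ind_{\mathcal{O}}(u_j)\), the admissibility condition \eqref{eq:indInc} lets me replace \(\ind_{\mathcal{O}}(u_j\vek{x}^{\vek{\beta}})\) by \(b+\tdeg{\vek{\beta}}\) for every admissible \(\vek{\beta}\), so the inner sum becomes
\[
\sum_{\operatorname{supp}(\vek{\beta})\subseteq S_j}(b+\tdeg{\vek{\beta}})\,\vek{y}^{\vek{\alpha_j}+\vek{\beta}}
= \vek{y}^{\vek{\alpha_j}}\sum_{\operatorname{supp}(\vek{\beta})\subseteq S_j}(\tdeg{\vek{\beta}}+b)\,\vek{y}^{\vek{\beta}}.
\]
I would then note that the remaining sum ranges over \(\Nat^{S_j}\) in the variables \(\setsuchas{y_\ell}{\ell\in S_j}\) with coefficient \(\tdeg{\vek{\beta}}+b\), which is precisely the series \(P_{\tdeg{S_j}}(\setsuchas{y_\ell}{\ell\in S_j};1,\dots,1;b)\) of Definition~\ref{def:P}. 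Summing over \(j\) yields \eqref{eq:IndAdmiss}; combining this with Theorem~\ref{thm:Pn} then gives the rational closed form of each summand.

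The argument is essentially bookkeeping, and the only point that needs care is the combinatorics of the decomposition: I must check that the reindexing is legitimate, i.e.\ that the blocks are genuinely disjoint and cover \(T([n])\), so that no power product is omitted or double-counted. This is exactly what the word ``partition'' in \eqref{eq:Idec} guarantees; the admissibility hypothesis \eqref{eq:indInc} then does all the remaining work, converting the nonlinear index function into an affine function of \(\tdeg{\vek{\beta}}\) on each block, which is what makes the \(P_n\) appear. No convergence issues arise, since the claim is a formal identity in \(\Q[[y_1,\dots,y_n]]\) and, within each block, the exponents \(\vek{\alpha_j}+\vek{\beta}\) are pairwise distinct across \(\vek{\beta}\), so each block contributes a well-defined formal series.
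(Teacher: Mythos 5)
Your proof is correct and is exactly the direct verification the paper has in mind---the paper's own proof of this lemma consists of the single word ``Obvious.'' The one interpretive wrinkle, which you handle properly, is that \eqref{eq:Idec} is literally a partition of \(I = T([n])\setminus\mathcal{O}\) rather than of \(T([n])\); since \(\ind_{\mathcal{O}}\) vanishes on \(\mathcal{O}\), the terms of \(\mathcal{O}\) contribute nothing to \(\Ind_{\mathcal{O}}\) (equivalently, your singleton blocks contribute \(P_0(;;0)=0\)), so the identity is unaffected.
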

  \begin{proof}
    Obvious.
  \end{proof}

  \begin{example}\label{ex:311cont}
    Not all decompositions \eqref{eq:Idec} are admissible  for
    \(\mathcal{O}\). Consider again the order ideal of
    Example~\ref{ex:311}. 
    We see that, because of the ``embedded'' 1 at position \((1,1)\),
    any suitable decomposition must include \(x_1x_2\) as a singleton.

    For instance, the 
    decomposition
    \begin{displaymath}
      I = x_2^3 T([2]) \cup x_1x_2^2 T({1}) \cup x_1x_2 T({1}) \cup
      x_1^3 T({1}) 
    \end{displaymath}
    does not yield an expression
    \begin{multline*}
      \Ind_{\mathcal{O}}(y_1,y_1) = y_2^3 P_2(y_1,y_2;1,1;1) + \\
      y_1y_2^2 P_1(y_1;1;1)  + y_1y_2 P_1(y_1;1;1) +
      y_1^3 P_1(y_1;1;1)
    \end{multline*}
    since the restriction \(\ind_{\mathcal{O}}(i,1)\), \(i \ge 1\) is
    not strictly increasing.

    The decomposition used in Theorem~\ref{thm:IndDim2} is
    \begin{multline*}
      I = x_2^3 T(\set{2}) \cup x_1x_2^3 T(\set{2}) \cup x_1^2x_2^3 T(\set{2}) \cup
      x_1^3x_2^3 T(\set{1,2}) 
      \cup  \\
      x_1^3x_2^2 T(\set{1}) \cup 
      x_1^3x_2 T(\set{1}) \cup 
      x_1^3 T(\set{1}) 
    \end{multline*}
    
    A more ``economic'' decomposition is 
    \begin{multline*}
      I =  x_2^3 T(\set{1,2}) \cup x_1x_2^2 T(\set{1}) \cup
      x_1x_2T(\emptyset) \cup x_1^2x_2 T(\set{x_1}) \cup x_1^3T(\set{x_1})
    \end{multline*}
    yielding the correct expression
    \begin{multline*}
      \Ind_{\mathcal{O}}(y_1,y_2) = y_2^3P_2(y_1,y_2;1,1;1) + \\
      y_1y_2^2P_1(y_1;1;1) + y_1y_2 + y_1^2y_2P_1(y_1;1;1) + y_1^3P_1(y_1;1;1)
    \end{multline*}
  \end{example}

  \begin{definition}
    If \(\mathcal{O} \subset T([n])\) then the minimal axis-parallel
    box \(B\) containing \(\mathcal{O}\) is called the \emph{bounding box}
    of \(\mathcal{O}\). The \emph{extreme corner} of \(B\) is the
    point \((m_1,\dots,m_n)\) with \(m_i =
    \max(\setsuchas{\alpha_i}{\alpha \in \mathcal{O}}\).
  \end{definition}
  
  \begin{lemma}\label{lemma:increaseGeneral}
    Let \(\mathcal{O} \subset T([n])\) be a finite order ideal, with a
    bounding box \(B\) with extreme corner \((m_1,\dots,m_n)\). Then,
    if \(\emptyset \neq S \subseteq [n]\), if \(\alpha_i > m_i\) for \(i
    \in S\), and if \(\beta_i =0\) for \(i \not \in S\), then
    \begin{equation}
      \label{eq:10}
      \ind_{\mathcal{O}}(\alpha+\beta) = \ind_{\mathcal{O}}(\alpha) + \tdeg{\beta}
    \end{equation}
  \end{lemma}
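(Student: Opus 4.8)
The plan is to prove the two inequalities separately. The inequality $\ind_{\mathcal{O}}(\alpha+\beta) \le \ind_{\mathcal{O}}(\alpha) + \tdeg{\beta}$ is immediate from Lemma~\ref{lemma:Kehrein}(b), taking $t = x^{\beta}$ and $t' = x^{\alpha}$; so the real content is the reverse inequality, and in fact I would establish equality directly.

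To do so I would reformulate the index by means of Lemma~\ref{lemma:Kehrein}(a). For any $\mu \in \Nat^n$, a factorization $x^{\mu} = x^{\gamma}\,t_2$ with $x^{\gamma} \in \mathcal{O}$ corresponds exactly to a divisor $x^{\gamma}$ of $x^{\mu}$ lying in $\mathcal{O}$, that is, to an exponent vector $\gamma \le \mu$ (componentwise) with $\gamma \in \mathcal{O}$; minimizing the degree $\tdeg{t_2} = \tdeg{\mu} - \tdeg{\gamma}$ then yields
\[
  \ind_{\mathcal{O}}(x^{\mu}) = \tdeg{\mu} - \max \setsuchas{\tdeg{\gamma}}{\gamma \in \mathcal{O}, \ \gamma \le \mu},
\]
the maximum being attained since $\mathcal{O}$ is finite and nonempty (it contains $\gamma = 0$).

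The key step is to show that the two maxima obtained for $\mu = \alpha$ and for $\mu = \alpha+\beta$ coincide; write them $M$ and $M'$. Since $\alpha \le \alpha+\beta$, the feasible set defining $M$ is contained in the one defining $M'$, so $M \le M'$. For the reverse I would argue that every $\gamma \in \mathcal{O}$ with $\gamma \le \alpha+\beta$ already satisfies $\gamma \le \alpha$: for $i \not\in S$ we have $\beta_i = 0$, hence $\gamma_i \le (\alpha+\beta)_i = \alpha_i$; while for $i \in S$ the definition of the extreme corner forces $\gamma_i \le m_i$ for all $\gamma \in \mathcal{O}$, and the hypothesis $\alpha_i > m_i$ then gives $\gamma_i \le m_i < \alpha_i$. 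Thus the two feasible sets are identical and $M = M'$.

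Combining the pieces gives
\[
  \ind_{\mathcal{O}}(\alpha+\beta) = \tdeg{\alpha+\beta} - M' = \tdeg{\alpha} + \tdeg{\beta} - M = \ind_{\mathcal{O}}(\alpha) + \tdeg{\beta},
\]
as desired. I expect the only subtle point to be the observation that advancing past the extreme corner in the directions indexed by $S$ provides no new divisor of $x^{\alpha+\beta}$ inside $\mathcal{O}$; this is exactly where both hypotheses $\alpha_i > m_i$ $(i \in S)$ and $\beta_i = 0$ $(i \not\in S)$ are used, and it is the higher-dimensional analogue of the local manipulation recorded in Remark~\ref{rem:man} for the planar case.
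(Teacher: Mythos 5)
Your proof is correct, and it takes a genuinely different route from the paper's. The paper reduces the lemma to a single unit step in one coordinate direction (\(\beta=\vek{e}_1\), \(\alpha_1>m_1\)), asserts that this suffices ``as in Lemma~\ref{lemma:increase2}'', and then runs a double induction --- on the dimension \(n\) and on the last coordinate \(k\) --- driven by the recursion \eqref{eq:rec}, with the case analysis closed off by Lemma~\ref{lemma:Kehrein} and the manipulation of Remark~\ref{rem:man}; in effect it replays the two-dimensional argument slice by slice. You instead exploit the factorization characterization of the index, Lemma~\ref{lemma:Kehrein}(a), rewriting
\[
  \ind_{\mathcal{O}}(x^\mu) = \tdeg{\mu} - \max \setsuchas{\tdeg{\gamma}}{x^\gamma \in \mathcal{O},\ \gamma \le \mu},
\]
and then observe that the two feasible sets of divisors (for \(\mu=\alpha\) and \(\mu=\alpha+\beta\)) coincide: in coordinates \(i \in S\) every \(\gamma\) with \(x^\gamma \in \mathcal{O}\) is already capped by \(m_i < \alpha_i\), and in coordinates \(i \notin S\) there is no increment since \(\beta_i=0\). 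This buys you several things: the argument is non-inductive, it handles arbitrary \(S\) and \(\beta\) in one stroke rather than via the reduction to unit steps, and it isolates exactly where each hypothesis is used. It is also, if anything, more watertight than the paper's proof, whose inductive step applies the recursion \eqref{eq:rec} while only tracking the divisors in directions \(x_1\) and \(x_n\) (ignoring the intermediate coordinates), a point your argument never needs to touch. What the paper's approach buys in exchange is uniformity of exposition --- it reuses the recursion and Remark~\ref{rem:man} already established for the planar case --- but on its own merits your proof is the cleaner one.
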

  \begin{proof}
    If \(n=1\) the assertion is trivial. The case \(n=2\) is
    Lemma~\ref{lemma:increase2}. 
    
    For a general \(n\), 
    we will prove that if \(\alpha_1 > m_1\)
    then
    \begin{equation}
      \label{eq:specN}
      \ind_{\mathcal{O}}(\alpha + r\vek{e}_1) =
      \ind_{\mathcal{O}}(\alpha) +r
    \end{equation}
    As in Lemma~\ref{lemma:increase2}, this is enough to show the full
    assertion. Furthermore, it is enough to show this for \(r=1\).

    Let \(I=T([n]) \setminus \mathcal{O}\), and
    put
    \begin{displaymath}
      \begin{split}
        I_k & =\setsuchas{\vek{x}^{\vek{\beta}} \in I}{\beta_n=k} \\
        \mathcal{O}_k &= \setsuchas{\vek{x}^{\vek{\beta}} \in
          \mathcal{O}}{\beta_n=k} 
      \end{split}
    \end{displaymath}

    If \(k=0\), then \(\mathcal{O}_0\) can be regarded as a finite
    order ideal in \(T([n-1])\), and so by induction (on \(n\)) \eqref{eq:specN}
    holds.
    
    We now prove the assertion by induction on \(k\).
    For \(k>0\), we have that
    \begin{multline*}
      \ind_{\mathcal{O}}(m_1+1,\alpha_2,\dots,\alpha_{n-1},k) 
      = 1 + \\
      \min(
      \ind_{\mathcal{O}}(m_1,\alpha_2,\dots,\alpha_{n-1},k),
      \ind_{\mathcal{O}}(m_1+1,\alpha_2,\dots,\alpha_{n-1},k-1)
      ).
    \end{multline*}
    If
    \begin{displaymath}
      \ind_{\mathcal{O}}(m_1+1,\alpha_2,\dots,\alpha_{n-1},k) 
      = 1 + \ind_{\mathcal{O}}(m_1,\alpha_2,\dots,\alpha_{n-1},k)
    \end{displaymath}
    then we are done.
    If
    \begin{displaymath}
       \ind_{\mathcal{O}}(m_1+1,\alpha_2,\dots,\alpha_{n-1},k) 
      = 1 + \ind_{\mathcal{O}}(m_1+1,\alpha_2,\dots,\alpha_{n-1},k-1)
    \end{displaymath}
    then we note that by the induction hypothesis
    \begin{displaymath}
      \ind_{\mathcal{O}}(m_1+1,\alpha_2,\dots,\alpha_{n-1},k-1) =
      1+ \ind_{\mathcal{O}}(m_1,\alpha_2,\dots,\alpha_{n-1},k-1)
    \end{displaymath}
    hence
    \begin{displaymath}
      \ind_{\mathcal{O}}(m_1+1,\alpha_2,\dots,\alpha_{n-1},k) 
      = 2 + \ind_{\mathcal{O}}(m_1,\alpha_2,\dots,\alpha_{n-1},k-1)
    \end{displaymath}
    Using Lemma~\ref{lemma:Kehrein} we get that
    \begin{displaymath}
      \ind_{\mathcal{O}}(m_1+1,\alpha_2,\dots,\alpha_{n-1},k) 
      = 1 + \ind_{\mathcal{O}}(m_1,\alpha_2,\dots,\alpha_{n-1},k).
    \end{displaymath}
    Remark~\ref{rem:man} applies here, as well.
  \end{proof}

  \begin{definition}
    If \(C \subset T([n])\) is a finite order ideal, and if
    \(\vek{u} \in C\), we define the set of \emph{free
      directions} at \(u\) by
    \begin{equation}
      \label{eq:7}
      \mathrm{free}_{C}(\vek{u}) \setsuchas{i \in [n]}{\vek{u} +
        \vek{e}_i \not \in C}
    \end{equation}
    Here, \(\vek{e}_i \in \Nat^n\) is the vector with zeroes in all
    positions except in the \(i\)'th position, where there  is a one.
  \end{definition}

  \begin{theorem}\label{thm:IndGenForm}
    Let \(\mathcal{O} \subset T([n])\) be a finite order ideal with
    bounding box \(B\). For \(\vek{u} \in \overline{\partial B}\), put
    \(f(\vek{u}) = \mathrm{free}_{\overline{\partial B}}(\vek{u})\).
    Then the following identity holds:
    \begin{equation}
      \label{eq:IndGeneral}
      \Ind_{\mathcal{O}}(y_1,\dots,y_n) = 
      \sum_{\vek{u} \in \overline{\partial B}} 
      P_{\# f(\vek{u})}(\setsuchas{y_i}{i \in  f(\vek{u})};
        1,\dots,1; \ind_{\mathcal{O}}(\vek{u})) 
    \end{equation}   
    In particular,
    \begin{displaymath}
      \Ind_{\mathcal{O}}(y_1,\dots,y_n) \prod_{i=1}^n (1-y_i)^2
    \end{displaymath}
    is a polynomial.
  \end{theorem}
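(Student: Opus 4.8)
The plan is to read \eqref{eq:IndGeneral} as the generating function of an admissible decomposition of $\Nat^n$ and then to invoke Lemma~\ref{lemma:admiss}. I would first make the cone structure explicit through a retraction map. Writing $(m_1,\dots,m_n)$ for the extreme corner of $B$, assign to each $\vek{\gamma}\in\Nat^n$ its \emph{excess set} $E(\vek\gamma)=\setsuchas{i\in[n]}{\gamma_i>m_i}$ and retract it to the base point $\vek{u}=\rho(\vek\gamma)$ defined by $u_i=m_i+1$ for $i\in E(\vek\gamma)$ and $u_i=\gamma_i$ otherwise. Then $\vek\gamma=\vek{u}+\vek\beta$ with $\beta_i=\gamma_i-(m_i+1)\ge0$ for $i\in E(\vek\gamma)$ and $\beta_i=0$ elsewhere, so that $\mathrm{supp}(\vek\beta)\subseteq f(\vek u)$, where the free directions at $\vek u$ are exactly the coordinates sitting on the outer shell, $f(\vek u)=\setsuchas{i}{u_i=m_i+1}$.

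The first step is the combinatorial one: to check that $\rho$ is well defined and that its fibres are precisely the cones $\vek u+\setsuchas{\vek\beta}{\mathrm{supp}(\vek\beta)\subseteq f(\vek u)}$, so that $\Nat^n$ is the \emph{disjoint} union of these cones as $\vek u$ ranges over the base points. I expect this to be the main obstacle, and it is entirely a bookkeeping matter: one must let $\vek u$ range over the whole outer shell of $B$ (every coordinate at most $m_i+1$), so that a point with several excess coordinates is retracted to a single base point carrying $m_i+1$ in each of them simultaneously. This is what keeps the cones disjoint rather than letting them overlap on the ``corner'' regions where two or more coordinates exceed their bound. The box points ($E(\vek\gamma)=\emptyset$) yield singleton cones with $f(\vek u)=\emptyset$ and reproduce the $P_0$-terms; those $\vek u\in\mathcal{O}$ drop out since $\ind_{\mathcal{O}}(\vek u)=0$.

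The second step, and the only place where the arithmetic of the index enters, is admissibility: for a base point $\vek u$ and any $\vek\beta$ with $\mathrm{supp}(\vek\beta)\subseteq f(\vek u)$ one needs $\ind_{\mathcal{O}}(\vek u+\vek\beta)=\ind_{\mathcal{O}}(\vek u)+\tdeg{\vek\beta}$. This follows directly from Lemma~\ref{lemma:increaseGeneral} with $S=f(\vek u)$: every free direction $i$ satisfies $u_i=m_i+1>m_i$, which is exactly the hypothesis of that lemma, so the statement applies verbatim. Thus the substantive analytic work (via Remark~\ref{rem:man} and Lemma~\ref{lemma:Kehrein}) has already been carried out in Lemma~\ref{lemma:increaseGeneral}, and here it costs nothing.

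Finally I would assemble the series. Summing $\vek y^{\vek u}\sum(\ind_{\mathcal{O}}(\vek u)+\tdeg{\vek\beta})\vek y^{\vek\beta}$ over each cone identifies the inner sum with $P_{\#f(\vek u)}(\setsuchas{y_i}{i\in f(\vek u)};1,\dots,1;\ind_{\mathcal{O}}(\vek u))$ by Definition~\ref{def:P}, and summing over the shell yields \eqref{eq:IndGeneral} (carrying the monomial weight $\vek y^{\vek u}$ exactly as in Lemma~\ref{lemma:admiss}). For the last assertion, Theorem~\ref{thm:Pn} shows each $P_{\#f(\vek u)}$ has denominator dividing $\prod_{i\in f(\vek u)}(1-y_i)^2$, hence dividing $\prod_{i=1}^n(1-y_i)^2$; since the outer shell of $B$ is finite the whole sum is finite, so $\Ind_{\mathcal{O}}(y_1,\dots,y_n)\prod_{i=1}^n(1-y_i)^2$ is a polynomial.
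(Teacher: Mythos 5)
Your proof is correct and takes essentially the same route as the paper's, whose entire proof is a citation of Lemma~\ref{lemma:increaseGeneral} (which gives admissibility of the cone decomposition based at the outer shell) together with Lemma~\ref{lemma:admiss} (which converts an admissible decomposition into the generating-function identity); you have simply made the implicit retraction, the disjointness of the fibres, and the final assembly explicit. One remark in your favor: your reading of \(\overline{\partial B}\) as the full outer shell \(\setsuchas{\vek{u}}{u_i \le m_i+1 \text{ for all } i}\), \emph{including} corner points where several coordinates equal \(m_i+1\), is the correct intended one (it is what Theorem~\ref{thm:IndDim2} uses, e.g.\ the term based at \((m,\lambda_1)\)), whereas the paper's literal definition of the border of \(B\) would exclude those corners, causing the remaining cones to overlap and the formula to double-count.
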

  \begin{proof}
    Follows from Lemma~\ref{lemma:increaseGeneral} and Lemma~\ref{lemma:admiss}.
  \end{proof}

  \begin{remark}
  Note that
  \begin{enumerate}
  \item  Theorem~\ref{thm:IndDim2} is a special case of
    Theorem~\ref{thm:IndGenForm}. Points within the bounding box have no
    free directions, hence contribute with a single term to
    \(\Ind_{\mathcal{O}}\). 
  \item We can not replace \(B\)
    with \(\mathcal{O}\) in \eqref{eq:IndGeneral}, since there can be
    several \(\vek{u} \in \partial \mathcal{O}\) with
    \(\mathrm{free}_{\mathcal{O}}(\vek{u}) = [n]\), leading to
    double-counting.
  \item There are other admissible decompositions of
    \(T([n])\setminus \mathcal{O}\), but as
    example~\ref{ex:311} shows, we might have to include singleton sets in
    the decomposition.
  \end{enumerate}
\end{remark}

\end{subsection}
\end{section}

\raggedright
\bibliography{border}
\bibliographystyle{amsplain}
\end{document}